\documentclass[a4paper,12pt]{amsart}

\usepackage{epsfig}
\usepackage{amsthm,amsfonts}
\usepackage{amssymb,graphicx,color}
\usepackage[all]{xy}

\setlength{\topmargin}{-20mm}
\setlength{\textwidth}{16cm}
\setlength{\textheight}{248mm}
\setlength{\oddsidemargin}{0cm}
\setlength{\evensidemargin}{0mm}

\newtheorem{theorem}{Theorem}[section]
\newtheorem{lemma}[theorem]{Lemma}

\newtheorem{definition}[theorem]{Definition}

\newtheorem{remark}[theorem]{Remark}

\newcommand{\R}{\mathbb R}

\begin{document}

\title[ Arc Criterion of Normal Embedding 
 ]{Arc Criterion of Normal Embedding 
}

\author{L. Birbrair} \thanks{The first author were partially supported by CAPES-COFECUB and by CNPq-Brazil, grants no. 302655/2014-0}

\address{Departamento of Matem\'atica, 
Universidade Federal do Cear\'a, Campus do Pici Bloco 914, CEP 60455--760, Fortaleza CE, Brazil}

\email{lev.birbrair@gmail.com}

\author{Rodrigo Mendes}
\address{Rodrigo Mendes - Departamento de Matem\'atica, Universidade de Integra\c{c}\~ao Internacional da Lusofonia Afro-Brasileira (unilab)
, Campus dos Palmares, Cep. 62785-000. Acarape-Ce,
Brasil} 

\email{rodrigomendes@unilab.edu.br}

\keywords{Normal embedding, Singularities}

\thanks{The second author has been partially supported by capes}
\subjclass[2015]{14B05; 32S50 }

\begin{abstract}
We present a criterion of local normal embedding of a semialgebraic (or definable in a polynomially bounded o-minimal structure) germ contained in $\R^n$ in terms of orders of contact of arcs. Namely, we prove that a semialgebraic germ is normally embedded if and only if for any pair of arcs, coming to this point the inner order of contact is equal to the outer order of contact.
\end{abstract}

\maketitle

\section{Introduction}
A connected subset of Euclidean space is called normally embedded if the two natural metrics, outer and the inner metric, are bi-Lipschitz equivalent and the bi-Lipschitz homeomorphism is given by the identity map. The notion of Normal Embedding (or in other words Lipschitz Normal Embedding) became rather popular in recent development of Singularity Theory. Is is used in Metric Homology Theory of Birbrair and Brasselet \cite{BB}, in Vanishing Homology of Valette \cite{V}, in Lipschitz Regularity theorem \cite{BFLS}. Several authors are investigating some special algebraic and semialgebraic sets in the spirit of their normal embedding. See, for example, the recent works \cite{BFN}, \cite{NPP}, \cite{KKKLS}. In this note we present an arc criterion of Normal Embedding that, we hope, it can be useful in these studies. The criterion is based on the arc selection lemma, an extremely important tool of Real Algebraic Geometry.

\bigskip

\noindent{\bf Acknowledgements}. We would like to thank Alexandre Fernandes, Edson Sampaio, Anne Pichon and Walter Neumann for useful discussions. We would like also to thank the anonymous referee for his patience and extremely useful comments and corrections. 

\bigskip

\section{Normally embedded sets}

Let $X \subset \mathbb{R}^n$ be a connected semialgebraic set. We define an inner metric on $X$ as follows: Let $x,y \in X$. The inner distance $d_X(x,y)$ is defined as the infimum of lengths of rectifiable arcs $\gamma:[0,1]\rightarrow X$ such that $\gamma(0)=x$ and $\gamma(1)=y$. Notice that for connected semialgebraic sets the inner metric is well-defined.
\begin{definition}
 \emph{A semialgebraic subset $X\subset\R^n$ is called \emph{normally embedded} if there exists $\lambda >0$ such that}
$$d_X(x_1,x_2)\le \lambda \|x_1-x_2\|,$$ 
for all $x_1,x_2\in X$. 
\end{definition}
$X$ is called normally embedded at $x_0$ if for sufficiently small $\epsilon >0$, $X \cap B_{x_0,\epsilon}$ is normally embedded. We may also say that the germ of $X$ at $x_0$ is normally embedded. 

Considering real or complex cusps $x^2=y^3$, one can see that the inner metric is not bi-Lipschitz equivalent to the Euclidean metric. On the other hand, the smooth compact semialgebraic sets are normally embedded.

By the results of Kurdyka and Orro \cite{K} (see also Birbrair and Mostowski \cite{BM}), there exists a semialgebraic metric 
\begin{center}
$d_P: X \times X \rightarrow \R,$ 
\end{center}
such that $(X,d_X)$ and $(X,d_P)$ are bi-Lipschitz equivalent and the identity map is bi-Lipschitz for $d_X$ and $d_P$.

An {\emph{arc} $\gamma$ with initial point at $x_0$ is a continuous semialgebraic map $\gamma:[0,\epsilon)\rightarrow \mathbb{R}^n$ such that $\gamma(0)=x_0$. When it does not lead to a confusion, we use the same notation for an arc and its image in $\mathbb{R}^n$.

For a semialgebraic function of one variable $f(t)$, where $f(0)=0$, we have $f(t)=a_1t^{\alpha}+o(t^{\alpha})$, for some $a_1 \in \mathbb{R}$ and $\alpha \in \mathbb{Q}$. The number $\alpha$ is called the order of $f$ at $0$. We use the notation $ord_t f$. \\
We can define the \emph{outer order of tangency} in the following way: 
\begin{center}
$tord(\gamma_1,\gamma_2)=ord_t \|\gamma_1(t)-\gamma_2(t)\|$,
\end{center}
where the arcs $\gamma_1$ and $\gamma_2$ are parametrized by the outer distance to the singular point, i.e., $\|\gamma_i(t)-x_0\|=t$, $i=1,2$. We may define the \emph{inner order of tangency} by
\begin{center}
$tord_{inn}(\gamma_1,\gamma_2)=ord_t (d_P(\gamma_1(t),\gamma_2(t)))$,
\end{center}
 where the arcs $\gamma_1$ and $\gamma_2$ are again parametrized by the outer distance to the singular point.

\begin{theorem} (Criterion of Normal embedding) \label{criterion} \emph{Let $X\subset \R^n$ be a closed semialgebraic germ at $x_0$. Then the following assertions are equivalent:}
\begin{itemize}
\item \emph{The germ of $X$ at $x_0$ is normally embedded};
 
\item \emph{There exists a constant $k>0$ such that for any pair of arcs $\gamma_1, \gamma_2$ parametrized by the distance at $x_0$, ($\gamma_i(0)=x_0$) we have 
\begin{center}
$d_X(\gamma_1(t), \gamma_2(t))\leq k\|\gamma_1(t)-\gamma_2(t))\|$;
\end{center}
\item For any pair of arcs $\gamma_1, \gamma_2$ parametrized by the distance to $x_0$ one has:}
\begin{center}
 $tord(\gamma_1,\gamma_2)=tord_{inn}(\gamma_1,\gamma_2)$.
 \end{center}
\end{itemize}
\end{theorem}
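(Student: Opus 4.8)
The plan is to prove the three assertions equivalent by the cycle $(1)\Rightarrow(2)\Rightarrow(3)\Rightarrow(1)$. The implication $(1)\Rightarrow(2)$ is immediate: if the germ is normally embedded with constant $\lambda$, then applying the defining inequality to the points $x_1=\gamma_1(t)$ and $x_2=\gamma_2(t)$ gives the required bound with $k=\lambda$, uniformly over all pairs of arcs.

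For $(2)\Rightarrow(3)$ I would argue at the level of orders in $t$. First, for any two points one always has $\|x_1-x_2\|\le d_X(x_1,x_2)$, since a straight segment is the shortest path in $\R^n$; because $d_P$ is bi-Lipschitz to $d_X$, the functions $t\mapsto d_P(\gamma_1(t),\gamma_2(t))$ and $t\mapsto d_X(\gamma_1(t),\gamma_2(t))$ have the same order, so $tord_{inn}(\gamma_1,\gamma_2)\le tord(\gamma_1,\gamma_2)$ holds for every pair of arcs, with no hypothesis. Hypothesis $(2)$ supplies the reverse inequality of orders: from $d_X(\gamma_1(t),\gamma_2(t))\le k\|\gamma_1(t)-\gamma_2(t)\|$ one reads $ord_t\, d_X(\gamma_1(t),\gamma_2(t))\ge ord_t\,\|\gamma_1(t)-\gamma_2(t)\|$, that is $tord_{inn}\ge tord$. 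Combining the two gives $tord=tord_{inn}$, which is $(3)$.

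The substantive implication is $(3)\Rightarrow(1)$, which I would prove in contrapositive form $\neg(1)\Rightarrow\neg(3)$, using the arc selection lemma. Assume the germ is not normally embedded. Then the semialgebraic function $\phi(x_1,x_2)=d_X(x_1,x_2)/\|x_1-x_2\|$, defined off the diagonal, is unbounded on every neighborhood of $(x_0,x_0)$. I want to convert this into a pair of arcs, each parametrized by distance to $x_0$ and evaluated at the \emph{same} parameter $t$, along which $\phi\to\infty$; such a pair satisfies $ord_t\, d_X(\gamma_1(t),\gamma_2(t))<ord_t\,\|\gamma_1(t)-\gamma_2(t)\|$, i.e. $tord_{inn}<tord$, contradicting $(3)$. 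The delicate point is that $tord$ and $tord_{inn}$ compare the two arcs at \emph{equal} distance from $x_0$, so I must produce the blow-up of $\phi$ along equidistant pairs. Performing the selection inside the semialgebraic set $Z=\{(x_1,x_2)\in X\times X:\|x_1-x_0\|=\|x_2-x_0\|\}$, the arc selection lemma yields arcs $\delta_1,\delta_2$ with a common radius $r(s)=\|\delta_i(s)-x_0\|$ and $\phi(\delta_1(s),\delta_2(s))\to\infty$; since semialgebraic $r$ is monotone near $0$, reparametrizing by $t=r(s)$ turns these into distance-parametrized arcs realizing the strict order inequality at a common $t$.

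The main obstacle is therefore to show that failure of normal embedding is already detected on equidistant pairs, i.e. that $\phi$ is unbounded on $Z$ near $(x_0,x_0)$. Equivalently, writing $U(t)=\sup\{\phi(x_1,x_2):\|x_1-x_0\|=\|x_2-x_0\|=t\}$, a semialgebraic function of $t$, I must rule out that $U$ stays bounded while $\phi$ is unbounded on general pairs. I would attack this by taking a general pair $x_1,x_2$ with radii $t_1\le t_2$, sliding $x_2$ inward along $X$ to a point $x_2'$ at radius $t_1$, and estimating via the triangle inequality $d_X(x_1,x_2)\le d_X(x_1,x_2')+d_X(x_2',x_2)$, where the first term is controlled by the equidistant bound through $U(t_1)\,\|x_1-x_2'\|$ and the radial discrepancy is controlled using $|t_1-t_2|\le\|x_1-x_2\|$. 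Realizing the inward slide with inner length comparable to $t_2-t_1$ is exactly where the polynomially bounded o-minimal structure and a second application of the arc selection lemma enter; this radial control is the technical heart of the argument, and I expect it to be the step demanding the most care.
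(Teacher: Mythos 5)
Your implications $(1)\Rightarrow(2)$ and $(2)\Rightarrow(3)$ are correct and essentially costless, and you have correctly located the real difficulty in $(3)\Rightarrow(1)$: the curve selection lemma hands you a pair of points blowing up the ratio $d_X/\|\cdot\|$, but $tord$ and $tord_{inn}$ compare arcs at \emph{equal} distance from $x_0$, so the pair must be synchronized by radius. The problem is that your resolution of this difficulty rests entirely on the radial sliding estimate $d_X(x_2,x_2')\le C(t_2-t_1)$, uniformly in $x_2$ and in $t_1<t_2=\|x_2-x_0\|$, and you do not prove it --- you explicitly defer it as ``the technical heart.'' That is a genuine gap, not a routine verification. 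Note that this estimate is much stronger than the Kurdyka--Orro bound $d_X(x,x_0)\le C\|x-x_0\|$ (which only controls descent all the way to $x_0$, giving $d_X(x_2, X\cap S_{t_1})\le Ct_2$ rather than $C(t_2-t_1)$); what you need is a uniform lower bound on the ``radial speed'' of $\rho=\|\cdot-x_0\|$ restricted to $X$, i.e.\ the absence of asymptotically critical points of $\rho|_X$ near $x_0$ together with integrability of a bounded descending vector field. None of this is quotable as a standard lemma in the form you need, and without it the reduction to the equidistant set $Z$ does not go through. (A secondary point to watch: curve selection applied to the unbounded locus of $\phi$ in $Z$ could a priori produce arcs converging to a diagonal point $(p,p)$ with $p\ne x_0$, or pairs whose radii do not tend to $0$; one must argue that failure of normal embedding of the \emph{germ} at $x_0$ forces the blow-up to be realized along arcs with $\delta_i(0)=x_0$.)

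The paper resolves the synchronization issue by a different and, as it turns out, cheaper device, which you may want to compare with. It applies curve selection not in $X\times X$ but to the planar semialgebraic set $\psi(X\times X)$ where $\psi(x_1,x_2)=(\|x_1-x_2\|,d_P(x_1,x_2))$ and $d_P$ is the semialgebraic metric of Kurdyka--Orro/Birbrair--Mostowski equivalent to $d_X$; non-normal embedding forces this image to be bounded by an arc tangent to the $y$-axis, whose preimage gives arcs $\tilde\gamma_1,\tilde\gamma_2$ with $ord_t\,d_P(\tilde\gamma_1(t),\tilde\gamma_2(t))<ord_t\|\tilde\gamma_1(t)-\tilde\gamma_2(t)\|$ but with only one of the two arcs parametrized by distance to $x_0$. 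The desynchronization is then repaired by an \emph{inner order comparison lemma}: $tord_{inn}(\gamma_1,\gamma_2)=ord_t\,d_P(\gamma_1(t),\gamma_2)$, where the right-hand side is the order of the distance from the point $\gamma_1(t)$ to the whole arc $\gamma_2$. This is proved using the pancake decomposition underlying $d_P$ together with the outer order comparison lemma of Birbrair--Fernandes and the non-archimedean property of contact orders, and it shows that reparametrizing the second arc by distance to $x_0$ does not change the order --- exactly the fact your radial sliding estimate was meant to substitute for. If you want to salvage your route, you should either prove the uniform radial descent estimate (expect this to require the same pancake/order-comparison machinery) or switch to an order-comparison lemma of this type.
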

\begin{remark}
\emph{The theorem is formulated in the semialgebraic category, but the result is true for polynomially bounded o-minimal structures. Actually, all the ingredients of the proof work in that case}. 
\end{remark}
\begin{proof}If $X$ is normally embedded at $x_0$ the inequality above follows from the definition. Assume now that $X$ is not normally embedded at $x_0$. Consider a map $\psi:X \times X \rightarrow \mathbb{R}^2$, defined as follows: $\psi(x_1,x_2)=(\|x_1-x_2\|,d_P(x_1,x_2))$. This map is semialgebraic. Since the distance functions $\|x_1-x_2\|$ and $d_P(x_1,x_2)$ are continuous, then the image $\psi(X \times X)$ is closed and locally connected at $\psi(x_0,x_0)=0 \in \mathbb{R}^2$, $x_0 \in X$. Moreover, this set is semialgebraic, according to Tarski-Seidenberg theorem. Since $X$ is not normally embedded at $x_0$, the set $\psi(X \times X)$ must be locally bounded about $0 \in \mathbb{R}^2$ by an arc $\beta \subset \psi(X \times X)$ such that $\beta$ is tangent to the $y$-axis. Taking $(\tilde{\gamma}_1(t), \tilde{\gamma}_2(t))$ belonging to the inverse image of $\beta$, we obtain that
\begin{equation}\label{order condition}
ord_t d_P(\tilde{\gamma}_1(t),\tilde{\gamma}_2(t)) < ord_t(\|\tilde{\gamma}_1(t)-\tilde{\gamma}_2(t)\|).
\end{equation}
We may suppose that the arc $\tilde{\gamma}_1$ is parametrized by the distance to the singular point $x_0$. But, we cannot suppose that the other arc is also parametrized the same way. That is why we need the order of comparison lemma.

\begin{remark}(Non-archimedean property) (see for example \cite{BF}). \emph{Let $\gamma_1, \gamma_2$ and $\gamma_3$ be three different semialgebraic arcs of $X$, $\gamma_i(0)=x_0 \ (i=1,2,3)$. Let $\alpha_{12},\alpha_{23}$ and $\alpha_{13}$ be outer orders of tangency between $(\gamma_1, \gamma_2)$, $(\gamma_2,\gamma_3)$ and $(\gamma_1,\gamma_3)$. Suppose that $\alpha_{12} \leq \alpha_{23} \leq \alpha_{13}$. Then $\alpha_{12}=\alpha_{23}$.}
\end{remark}
\begin{proof}
$\|\gamma_1(t)-\gamma_2(t)\|\leq \|\gamma_1(t)-\gamma_3(t)\|+\|\gamma_3(t)-\gamma_2(t)\| \Rightarrow \alpha_{12} \geq \alpha_{23}$.
\end{proof}
Observe that the function $d_{P}(\gamma_1(t), \gamma_2)$ given by 
\begin{equation}\label{distance}
d_{P}(\gamma_1(t), \gamma_2)=inf\{d_P(\gamma_1(t),y); y \in \gamma_2\}
\end{equation}
is a semialgebraic function and $d_{P}(\gamma_1(0), \gamma_2)=0$. Then, $ord_t d_{P}(\gamma_1(t), \gamma_2)$ is well defined.

\begin{lemma} (Inner order comparison lemma) \   
$tord_{inn}(\gamma_1,\gamma_2)=ord_t (d_{P}(\gamma_1(t), \gamma_2))$.
\end{lemma}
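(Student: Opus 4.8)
The plan is to prove separately the two order inequalities whose conjunction is the claim. Write $a(t)=d_{P}(\gamma_1(t),\gamma_2(t))$, so that $tord_{inn}(\gamma_1,\gamma_2)=ord_t\, a$, and write $b(t)=d_{P}(\gamma_1(t),\gamma_2)$ for the distance to the whole arc, defined in \eqref{distance}. Since $\gamma_2(t)\in\gamma_2$, the infimum defining $b$ gives $b(t)\le a(t)$ for all small $t$; as both are nonnegative semialgebraic functions vanishing at $0$, this already forces $ord_t\, b\ge ord_t\, a$. Hence the entire content of the lemma is the reverse inequality $ord_t\, b\le ord_t\, a$.

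To get it, I would first realize the infimum. By continuity of $d_P$ together with compactness of a small subarc of $\gamma_2$, and by semialgebraic choice, for each small $t$ there is a parameter $\sigma(t)$, depending semialgebraically on $t$, with $b(t)=d_P(\gamma_1(t),\gamma_2(\sigma(t)))$, where $\gamma_2$ is parametrized by outer distance so that $\|\gamma_2(\sigma(t))-x_0\|=\sigma(t)$. The triangle inequality for $d_P$ then gives $a(t)\le b(t)+d_P(\gamma_2(\sigma(t)),\gamma_2(t))$. The strategy is to show that the ``detour term'' $d_P(\gamma_2(\sigma(t)),\gamma_2(t))$ is dominated by $b(t)$ up to a multiplicative constant, which forces $a(t)\le C\,b(t)$ and hence $ord_t\, a\ge ord_t\, b$, completing the argument.

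To bound the detour I use two tame estimates, both resting on the fact that $d_P$ is bi-Lipschitz equivalent to the inner metric $d_X$, which itself dominates the Euclidean distance. On one hand, since $\gamma_1(t)$ and $\gamma_2(\sigma(t))$ lie at Euclidean distances $t$ and $\sigma(t)$ from $x_0$, the reverse triangle inequality gives $\|\gamma_1(t)-\gamma_2(\sigma(t))\|\ge |t-\sigma(t)|$, so $b(t)\ge c\,|t-\sigma(t)|$. On the other hand, $\gamma_2(\sigma(t))$ and $\gamma_2(t)$ lie on the single arc $\gamma_2$, so $d_X(\gamma_2(\sigma(t)),\gamma_2(t))$ is at most the length of the subarc joining them; parametrizing $\gamma_2$ by outer distance makes its speed bounded near $x_0$ (from a Puiseux expansion $\gamma_2(u)=x_0+u\,\omega(u)$ with $\omega$ a unit vector one gets $|\gamma_2'(u)|\to 1$), whence this length is $\le C\,|t-\sigma(t)|$ and thus $d_P(\gamma_2(\sigma(t)),\gamma_2(t))\le C'|t-\sigma(t)|$. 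Combining the two estimates gives $d_P(\gamma_2(\sigma(t)),\gamma_2(t))\le (C'/c)\,b(t)$, so the triangle inequality above yields $a(t)\le(1+C'/c)\,b(t)$ and therefore $ord_t\, a\ge ord_t\, b$. Together with the first paragraph this gives $ord_t\, a=ord_t\, b$, as required.

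The main obstacle I anticipate is precisely the upper bound on the detour term. It requires, first, that the minimizer position $\sigma(t)$ be a genuine semialgebraic arc so that its order is defined, and second, that the inner ($d_P$-) distance between two points of $\gamma_2$ be comparable to the difference of their outer-distance parameters. This comparability is a tameness statement about semialgebraic arcs parametrized by distance to the base point; the delicate point is to rule out that the subarc joining the two parameters is abnormally long, which is exactly what the bounded-speed Puiseux estimate controls.
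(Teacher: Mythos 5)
Your argument is correct, but it follows a genuinely different route from the paper's. The paper proves the lemma by unwinding the definition of $d_P$ through the pancake decomposition: it writes $d_P(\gamma_1(t),\gamma_2)$ as a chain of Euclidean distances between intermediate arcs $\tilde\beta_1,\dots,\tilde\beta_N$ lying pairwise in common pancakes, uses the non-archimedean property to replace the sum by a single dominant term $\|\beta_S(t)-\beta_{S-1}(t)\|$, and then invokes the outer order comparison lemma of \cite{BF} to identify that term with $d_{outer}(\gamma_1(t),\tilde\beta_S)$ up to order. You instead treat $d_P$ as a black box: you only use that it is a metric bi-Lipschitz equivalent to $d_X$ (hence dominates the Euclidean distance), realize the infimum by a minimizer $\gamma_2(\sigma(t))$, and control the detour $d_P(\gamma_2(\sigma(t)),\gamma_2(t))$ by the subarc length of $\gamma_2$, which the unit-speed estimate for an arc parametrized by distance to $x_0$ bounds by $C|t-\sigma(t)|\le (C/c)\,b(t)$. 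In effect you reprove the \cite{BF} comparison lemma directly in the inner setting rather than citing its outer version. What your approach buys is self-containedness and generality: it needs nothing about the pancake structure, so it works verbatim for any semialgebraic metric bi-Lipschitz equivalent to $d_X$, and it transfers to polynomially bounded o-minimal structures without change. What the paper's approach buys is that it stays entirely inside the already-developed machinery of \cite{BM} and \cite{BF}. Two minor remarks on your write-up: the semialgebraicity of $\sigma(t)$ is not actually used anywhere (only the pointwise inequality $a(t)\le Kb(t)$ matters for comparing orders), though it is available by definable choice; and you should note that the infimum over the non-compact arc $\gamma_2$ is attained for small $t$ because points of $\gamma_2$ far from $x_0$ have $d_P$-distance to $\gamma_1(t)$ bounded below, so the minimization takes place on a compact subarc.
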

\begin{proof}

Consider the pancake decomposition $\{X_i\}$ of $X$, where the metric $d_P$ corresponds to this decomposition (see \cite{BM}). By definition of metric $d_P$, we choose semialgebraic arcs $\tilde{\beta}_1, \dots \tilde{\beta}_N$, $\tilde{\beta}_i(0)=x_0$, $i=1,\ldots,N$, such that all the pairs $(\gamma_1,\tilde{\beta}_1), (\tilde{\beta}_1,\tilde{\beta}_2),\ldots, (\tilde{\beta}_N,\gamma_2)$ belong to the same ``pancake", i.e., a normally embedded subset of $X$ (see \cite{BM}). For $t \in [0,\delta), \ \delta$ sufficient small, we have
\begin{center}
$d_P(\gamma_1(t),\gamma_2)= \|\gamma_1(t)-\tilde{\beta}_1(t)\|+\|\tilde{\beta}_1(t)-\tilde{\beta}_2(t)\|+\ldots+\|\tilde{\beta}_N(t)-\tilde{\gamma}_2(t)\|$.
\end{center}
Notice that arcs $\tilde{\beta}_i$ are not necessarily parametrized by the distance to $x_0$. Moreover, we have
\begin{equation}
\|\gamma_1(t)-\tilde{\beta}_S(t)\| \geq d_{outer}(\gamma_1(t), \tilde{\beta}_S), \forall S, 
\label{ine1}
\end{equation}
 where $d_{outer}(\gamma_1(t), \tilde{\beta}_S)$ is defined as in (\ref{distance}), considering the euclidean distance. Otherwise, 
\begin{center}
$d_P(\gamma_1(t),\gamma_2(t)) \leq \|\gamma_1(t)-\beta_1(t)\|+\|\beta_1(t)-\beta_2(t)\|+\ldots+\|\beta_N(t)-\gamma_2(t)\|,$
\end{center}
where now $\beta_i(t)$ and $\gamma_2(t)$ is a parametrization of $\tilde{\beta_i}$ and $\gamma_2$ by distance to the origin. By the non-archimedean property, we have
\begin{center}
$\|\gamma_1(t)-\beta_1(t)\|+\|\beta_1(t)-\beta_2(t)\|+\ldots+\|\beta_N(t)-\gamma_2(t)\| \simeq \|\beta_S(t)-\beta_{S-1}(t)\|,$
\end{center}
for some $S \in \{1,\ldots,N+1\}$, where $\beta_0(t)$ is $\gamma_1(t)$ and $\beta_{N+1}(t)$ is $\gamma_2(t)$ and
\begin{center}
$\|\beta_S(t)-\beta_{S-1}(t)\|\simeq \|\gamma_1(t)-\beta_S(t)\|$,
\end{center}
where $f(t)\simeq g(t)$ means that the functions have the same order. Now, the outer order comparison lemma of \cite{BF} says:
\begin{center}
$\|\gamma_1(t)-\beta_S(t)\|\simeq d_{outer}(\gamma_1(t),\tilde{\beta}_S)$,
\end{center}
so, there exists constant $C_2>0$ such that 
\begin{equation}
d_P(\gamma_1(t),\gamma_2(t))\leq C_2d_{outer}(\gamma_1(t),\tilde{\beta}_S).
\label{ine2}
\end{equation}
Hence, by (\ref{ine1}) and (\ref{ine2}) the lemma is proved.
\end{proof}
End of the proof of Theorem 2.2: 

Since $d_X$ is bi-Lipschitz equivalent to the $d_P$, using the previous lemma and the inequality (\ref{order condition}), we obtain that
\begin{center}
$\lim_{t \to 0^+}\frac{d_X(\gamma_1(t),\gamma_2(t))}{\|\gamma_1(t)-\gamma_2(t)\|}=+\infty$,
\end{center}
or, in other words, $tord(\gamma_1,\gamma_2) > tord_{inn}(\gamma_1,\gamma_2)$, where $\gamma_1, \ \gamma_2$ can be considered parametrized by the distance to the point $x_0$. 
\end{proof}

\end{document}